\long\def\symbolfootnote[#1]#2{\begingroup%
\def\thefootnote{\fnsymbol{footnote}}\footnote[#1]{#2}\endgroup}
\newcommand{\Z}{\ensuremath{\mathbb{Z}}}
\def\imod#1{\allowbreak\mkern10mu({\operator@font mod}\,\,#1)}
\renewcommand*\env@matrix[1][*\c@MaxMatrixCols c]{%
  \hskip -\arraycolsep
  \let\@ifnextchar\new@ifnextchar
  \array{#1}}
\newtheorem{theorem}{Theorem}[section]
\newtheorem{lemma}[theorem]{Lemma}
\newtheorem{corollary}[theorem]{Corollary}
\newtheorem*{theorem*}{Theorem}
\theoremstyle{definition}
\newtheorem{definition}[theorem]{Definition}
\newtheorem{example}[theorem]{Example}
\numberwithin{equation}{section}
\newcommand{\ignore}[1]{}
\newcommand{\mynote}[1]{}
\begin{document}

\setcounter{section}{0}
\title{Idempotents in integral ring of dihedral quandle}
\author{P Narayanan and Saikat Panja}
\email{nrynnp@gmail.com, panjasaikat300@gmail.com}
\address{IISER Pune, Dr. Homi Bhabha Road, Pashan, Pune 411 008, India}
\thanks{The first named author is partially supported by the IISER Pune research fellowship and the second author has been partially supported by supported by NBHM fellowship.}
\date{\today}
\subjclass[2020]{12F10, 16T05.}
\keywords{Quandle}
\setcounter{tocdepth}{4}

\begin{abstract}
We provide the structure of some idempotents in the integral ring of a dihedral quandles, for odd case. The solution is complete in case $n=5$. This produces an example of a connected quandle $X$ such that $\textup{Aut}(X)\not\cong\textup{Aut}(\mathbb{Z}[X])$.
\end{abstract}
\maketitle
\section{Introduction}
A \textit{quandle} is a pair $(A,\cdot)$ such that `$\cdot$' is a binary operation satisfying
\begin{enumerate}
	\item the map $S_a:A\longrightarrow A$, defined as $S_a(b)=b\cdot a$ is an automorphism for all $a\in A$,
	\item  for all $a\in A$, we have $S_a(a)=a$.
\end{enumerate}
The automorphisms $S_a$ are called the inner automorphisms of the quandle $A$. A quandle is said to be  \textit{connected
} if the group of inner automorphisms acts transitively on the quandle.
Let $\Z_n$ denote the cyclic group of order $n$. Then defining  $a\cdot b=2b-a$ defines a quandle structure on $A=\Z_n$. This is known as the \textit{dihedral quandle}, to be denoted by $Q_n$. Any dihedral quandle of odd order is connected. For other examples see \cite{BaPaSi19}.

\noindent To get a better understanding of the structure, a theory parallel to group rings was introduced by Bardakov, Passi and Singh in \cite{BaPaSi19}.  The quandle ring of a quandle $A$ is defined as follows. Let $R$ be a commutative ring. Consider 
\begin{displaymath}
	R[A] = \left\{\sum_{i}r_ia_i: r_i\in R,a_i\in A  \right\}.
\end{displaymath}
Then this is an additive group in usual way. Define multiplication as 
\begin{displaymath}
	\left(\sum_{i}r_ia_i\right)\cdot \left(\sum_{j}s_ja_j\right) = \sum_{i,j}r_is_j(a_i\cdot a_j).
\end{displaymath}
The terms idempotents and ring automorphism are of usual meaning, as in ring theory. Since $\Z[A]$ is an infinite ring, it is usually difficult to conclude about the idempotent structure of the ring.
Also since in quandle $a*a=a$ for all $a\in A$, these are trivial idepotents in the quandle ring $\Z[A]$.

Here we introduce the notion of an adjacency matrix of a quandle. This helps us in finding some of the idempotents in integral quandle ring of a dihedral quandle of odd order. Using this we further show that $\Z[Q_5]$ has only trivial idempotents. This further helps in finiding the automorphism group of $\Z[Q_5]$. This answers a question of Elhamdadi in negation, whether $\textup{Aut}(\Z[X])\cong\textup{Aut}(X)$, for a connected quandle $X$.

\section{Adjacency matrix of a quandle}
The following definition is from \cite{NeHo05}
\begin{definition}
Let $(X,*)$ be a quandle of size $n$. Define the $n\times n$ matrix $A=(a_{ij})$ with $a_{ij}=x_i*x_j$. This matrix will be
called as the \textit{adjacency matrix of the quandle}.
\end{definition}
\begin{example}
Let $Q_5$ be the dihedral quandle of size $5$. Then the adjacency matrix of $Q_5$ will be given by
\begin{align*}
    \begin{pmatrix}
    0&2&4&1&3\\
    4&1&3&0&2\\
    3&0&2&4&1\\
    2&4&1&3&0\\
    1&3&0&2&4
    \end{pmatrix}.
\end{align*}
\end{example}
\begin{lemma}\label{l001}
Let $n$ be an odd integer. Then the rows and columns of the adjacency matrix corresponding to a dihedral quandle of order $n$, is determined by two elements $\rho,\sigma\in S_n$. Here $S_n$ denotes the symmetric group on letters $\{0,1,\cdots,n-1\}$. Furthermore, the first row is determined by 
\begin{align*}
    \rho=\left(0\quad2\quad\cdots\quad n-1\quad1\quad3\quad\cdots\quad n-2\right),
\end{align*}
and the first column is determined by 
\begin{align*}
    \sigma=(0\quad1\quad2\quad\cdots\quad n-1)^{-1}.
\end{align*}
\end{lemma}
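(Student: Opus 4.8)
The plan is to work directly from the defining formula for the entries of the adjacency matrix and to recognise the two claimed permutations as translations of $\Z_n$. Since $x_i=i$ and the dihedral operation is $a\cdot b=2b-a$, the $(i,j)$ entry is $a_{ij}=2j-i \pmod n$, with $i,j\in\{0,1,\dots,n-1\}$. First I would read off the first row and first column from this formula: the first row is $(a_{0j})_j=(2j \bmod n)_j$ and the first column is $(a_{i0})_i=(-i \bmod n)_i$.

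Next I would identify the permutations. Let $\rho$ be the translation $x\mapsto x+2 \pmod n$ and $\sigma$ the translation $x\mapsto x-1 \pmod n$. The crucial point, and the only place where the hypothesis that $n$ is odd enters, is that $\gcd(2,n)=1$, so $\rho$ has order $n$ and is therefore a single $n$-cycle. Writing out its orbit of $0$, namely $0,\rho(0),\rho^2(0),\dots$, gives $0,2,4,\dots,n-1,1,3,\dots,n-2$, which is exactly the cycle $\rho$ of the statement and simultaneously is exactly the first row computed above. Likewise $\sigma=(0\ 1\ \cdots\ n-1)^{-1}$ sends $x$ to $x-1$, and its orbit of $0$ is $0,n-1,n-2,\dots,1$, matching the first column.

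Finally, to justify that the entire array of rows and columns is determined by this pair, I would verify the single identity $a_{ij}=\rho^j\sigma^i(0)=2j-i$. Because $\rho$ and $\sigma$ are both translations they commute, so this can be read either way: moving one step to the right along any row applies $\rho$ to every entry (adds $2$), and moving one step down any column applies $\sigma$ (subtracts $1$). Thus each row is the $\sigma$-image of the row above it and each column is the $\rho$-image of the column to its left, so the whole matrix is reconstructed from $a_{00}=0$ together with $\rho$ and $\sigma$.

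I do not expect a genuine obstacle here; the statement is essentially a bookkeeping exercise about the affine map $j\mapsto 2j-i$ on $\Z_n$. The two points requiring care are the coprimality argument that makes $\rho$ a full $n$-cycle (so that its cycle notation really does list an entire row), and keeping the $0$-based indexing and the $\bmod n$ reductions consistent when matching the abstract cycles against the concrete rows and columns.
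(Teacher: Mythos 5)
Your proof is correct and takes essentially the same route as the paper: both arguments rest on the formula $a_{ij}=2j-i \pmod n$ and the observation that moving one step right along a row adds $2$ to every entry while moving one step down a column subtracts $1$, so the entire matrix is reconstructed from the corner entry together with the two translations. If anything, your write-up is more complete than the paper's, which dismisses the identification of the first row with $\rho$ as ``easily shown,'' handles the column case by ``follows similarly,'' and never explicitly states that oddness is needed precisely so that $\gcd(2,n)=1$ makes $x\mapsto x+2$ a single $n$-cycle --- the point your argument makes explicit and which is exactly what fails in the even case (Lemma \ref{l002}).
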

\begin{proof}
We first prove that all the rows of the adjacency matrix is determined by one row. 
To achieve this let us assume $x_i*x_j=x_k$ and $x_i*x_{j+1}=x_l$. We need to show that
$x_m*x_r=x_k$ implies $x_m*x_{r+1}=x_l$. From the given three conditions we have
\begin{align*}
    &2j-i=k,\quad 2j+2-i=l,\quad 2r-m=k\pmod{n}.
\end{align*}
Hence we have $2r+2-m=l\pmod{n}$. Now we compute the first row of the matrix. This can be easily shown to be equal to $\rho$. The comment about $\sigma$ follows similarly.
\end{proof}

\begin{lemma}\label{l002}
Let $n$ be an even integer. Then the rows and columns of the adjacency matrix corresponding to a dihedral quandle of order $n$, is determined by three elements $\rho_1,\rho_2,\sigma\in S_n$. Here $S_n$ denotes the symmetric group on letters $\{0,1,\cdots,n-1\}$. Furthermore, the row is determined by 
\begin{align*}
    \rho_1&=\left(0\quad2\quad\cdots\quad n-2\right)\\
    \rho_2&=\left(1\quad 3\quad\cdots\quad n-1\right),
\end{align*}
and the column is determined by 
\begin{align*}
    \sigma=(0\quad1\quad2\quad\cdots\quad n-1)^{-1}.
\end{align*}
\end{lemma}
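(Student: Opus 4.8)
The plan is to mirror the argument of Lemma \ref{l001}, tracking how the entries of the adjacency matrix $A=(a_{ij})$ with $a_{ij}=x_i*x_j=2j-i\pmod n$ change as we move one step along a row or down a column, and to isolate the single feature that differs when $n$ is even: the step ``add $2$'' is no longer a single $n$-cycle.

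First I would treat the rows, reusing the consistency computation of Lemma \ref{l001}. Suppose $x_i*x_j=x_k$ and $x_i*x_{j+1}=x_l$; then $2j-i=k$ and $2(j+1)-i=l$, so $l\equiv k+2\pmod n$, and this relation is independent of the row, since $x_m*x_r=x_k$ forces $x_m*x_{r+1}\equiv k+2\equiv l$. Thus passing from column $j$ to column $j+1$ always applies the map $v\mapsto v+2$ on $\Z_n$. The key observation is that, because $\gcd(2,n)=2$ for even $n$, this map is not a single cycle but the product of the two disjoint cycles $\rho_1=(0\ 2\ \cdots\ n-2)$ on the even residues and $\rho_2=(1\ 3\ \cdots\ n-1)$ on the odd residues. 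Since $a_{ij}=2j-i\equiv i\pmod 2$, every entry of row $i$ has the same parity as $i$; hence row $i$ runs through $\rho_1$ when $i$ is even and through $\rho_2$ when $i$ is odd. Reading off rows $i=0$ and $i=1$ explicitly (the entries $2j$ and $2j-1$ as $j$ increases) exhibits exactly these two cycles, which is the claimed form.

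For the columns the argument is identical in spirit but simpler, because moving from row $i$ to row $i+1$ sends $a_{ij}=2j-i$ to $2j-(i+1)=a_{ij}-1$, i.e.\ applies $v\mapsto v-1$. As $\gcd(1,n)=1$ this is a single $n$-cycle irrespective of the parity of $n$, namely $\sigma=(0\ 1\ \cdots\ n-1)^{-1}$, and computing the first column $a_{i0}=-i$ confirms it lists this cycle. Hence one permutation suffices for the columns while two are needed for the rows.

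These are routine affine manipulations, so the only genuine point to get right --- and the reason the even case deserves a separate statement --- is the parity splitting in the row direction: I must verify that each row is confined to a single parity class and therefore cannot be described by one cycle, which forces the pair $(\rho_1,\rho_2)$. I expect this bookkeeping, rather than any deeper difficulty, to be the main thing to handle with care.
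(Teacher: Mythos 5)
Your proof is correct and takes exactly the route the paper intends: the paper's own proof of this lemma is literally ``Same as the last lemma and hence left to the reader,'' and your argument is precisely that adaptation of Lemma \ref{l001}'s consistency computation, with the parity observation (that the step $v\mapsto v+2$ on $\Z_n$ splits into the two disjoint cycles $\rho_1,\rho_2$ when $n$ is even, while $v\mapsto v-1$ remains a single $n$-cycle) supplying the only genuinely new detail. Your write-up is in fact more complete than the paper's, since it records the point the paper leaves implicit --- that each row is confined to a single parity class, which is why two cycles are needed.
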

\begin{proof}
Same as the last lemma and hence left to the reader.

\end{proof}
\begin{lemma}\label{l003}
Let $Q_n$ denote the dihedral quandle of odd order $n$. The $\sum\limits_{i=0}^{n-1}\alpha_ix_i\in \Z[Q_n]$ is an 
idempotent if and only if $(\alpha_0,\alpha_1,\cdots,\alpha_{n-1})$ is a solution to the system of equations given by 
\begin{align*}
    B\cdot T=T,\quad\sum\limits_{i=0}^{n-1}t_i=1,
\end{align*}
where $T=(t_0,t_1,\cdots,t_{n-1})^t$ and $b_{ij}=t_{a_{ij}}$ with $A=(a_{ij})$ being the adjacency matrix of $Q_n$.
\end{lemma}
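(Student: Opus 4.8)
The plan is to convert the single non-associative ring equation $e\cdot e=e$ into the $n$ scalar equations obtained by comparing coefficients of each basis element $x_k$, and then to recognize the resulting quadratic system as precisely $BT=T$. First I would write $e=\sum_i\alpha_i x_i$ and expand using the definition of the product in $\Z[Q_n]$,
\begin{align*}
  e\cdot e=\sum_{i,j}\alpha_i\alpha_j\,(x_i\cdot x_j)=\sum_{i,j}\alpha_i\alpha_j\,x_{a_{ij}}.
\end{align*}
Collecting terms, the coefficient of $x_k$ in $e\cdot e$ is $\sum_{(i,j):\,a_{ij}=k}\alpha_i\alpha_j$, so $e$ is idempotent exactly when this equals $\alpha_k$ for every $k$.

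The heart of the argument is to reindex the constraint set $\{(i,j):a_{ij}=k\}$, and here I would invoke the structure from Lemma~\ref{l001}: since $a_{ij}=2j-i\imod n$, for each fixed $j$ there is a \emph{unique} $i$ with $a_{ij}=k$, namely $i=2j-k$. Hence the contributing pairs are exactly $\{(2j-k,\,j):j\in\Z_n\}$, and the $k$-th idempotency equation reads $\sum_j\alpha_{2j-k}\,\alpha_j=\alpha_k$. Identifying $t_i=\alpha_i$ and $b_{kj}=t_{a_{kj}}=\alpha_{2j-k}$, the left-hand side is $\sum_j b_{kj}t_j=(BT)_k$, so the full collection of equations is exactly $BT=T$. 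Since every step above is an equality, both directions of the equivalence are immediate, giving idempotency $\iff BT=T$.

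It then remains to justify the normalization $\sum_i t_i=1$. I would sum the $n$ equations of $BT=T$; equivalently, apply the augmentation $\epsilon\colon\Z[Q_n]\to\Z$, $x_i\mapsto1$, which is multiplicative because a product of two basis elements is again a basis element. This gives $\left(\sum_i t_i\right)^2=\sum_i t_i$, so $s:=\sum_i t_i\in\{0,1\}$. I expect the genuinely delicate point to be exactly here: the stated equivalence requires $s=1$, so one must show that the only idempotent lying in the augmentation ideal $\{s=0\}$ is the trivial one. The cleanest route is a Fourier/Parseval argument: writing $u_a=\sum_i t_i\,\omega^{ai}$ with $\omega$ a primitive $n$-th root of unity, the system $BT=T$ transforms into $u_{2a}u_{-a}=u_a$, and reality of $T$ (so $u_{-a}=\overline{u_a}$) forces $|u_{2a}|=1$ whenever $u_a\neq0$. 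Propagating this along the multiplication-by-$2$ orbit of the indices, together with Parseval's identity $\sum_a|u_a|^2=n\sum_i t_i^2$, makes $\sum_i t_i^2$ non-integral unless $T=0$. For $n=5$, where $2$ is a primitive root modulo $5$, the orbit is all of $\Z_5\setminus\{0\}$ and this settles the matter completely; for general odd $n$ the orbit structure is coarser and this last step is where the real care is needed. Granting it, the nonzero idempotents are precisely the integer solutions of $BT=T$ with $\sum_i t_i=1$, as claimed.
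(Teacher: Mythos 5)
Your first half---expanding $e\cdot e$, observing that for fixed $j$ and $k$ the equation $a_{ij}=k$ has the unique solution $i=2j-k$, and reading off $\sum_j\alpha_{2j-k}\alpha_j=\alpha_k$ as the $k$-th row of $BT=T$---is exactly the paper's argument; the paper phrases the uniqueness via the involution identity $(x_k*x_i)*x_i=x_k$ instead of solving $2j-i\equiv k\pmod{n}$, but it is the same observation. Where you diverge is on the normalization, and there you are in fact more careful than the paper: the paper simply writes ``Since we know that $\sum_i\alpha_i=1$'' with no justification, whereas (as you note) summing the equations, or applying the augmentation map, only yields $\sum_i\alpha_i\in\{0,1\}$. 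This is a genuine gap in the paper's own proof, and the lemma as literally stated even fails for the zero idempotent; your closing restriction to \emph{nonzero} idempotents is the correct reading. Your Fourier/Parseval argument does close this gap, and your hedge about general odd $n$ is unnecessary: you never need $2$ to be a primitive root. On each orbit of the doubling map $a\mapsto 2a$ (a bijection of $\Z_n$ since $n$ is odd), the relation $u_{2a}\overline{u_a}=u_a$ propagates around the cycle: if $u_a\neq 0$ then $|u_{2a}|=1$, hence $|u_{4a}|=1$, and after $d$ steps with $2^d a=a$ you return to $|u_a|=1$. So for every $a\neq 0$, either $u_a=0$ or $|u_a|=1$, regardless of orbit structure. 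Parseval then gives $n\sum_i t_i^2=u_0^2+\#\{a\neq 0: u_a\neq 0\}$; when $u_0=0$ the right side is at most $n-1$, forcing $T=0$. In fact your argument proves more than the lemma asks: when $u_0=1$ the right side is at most $n$, forcing $\sum_i t_i^2=1$ and hence $T$ a standard basis vector, so \emph{every} idempotent of $\Z[Q_n]$ is trivial for every odd $n$---which would subsume the paper's Gr\"obner-basis computation for $n=5$.
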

\begin{proof}
Let $\sum_{i=0}^{n-1}\alpha_ix_i\in\Z[Q_n]$ is an idempotent element. Then we get that
\begin{align*}
    &\left(\sum_{i=0}^{n-1}\alpha_ix_i\right)^2=\sum_{i=0}^{n-1}\alpha_ix_i\\
    \implies&\sum\limits_{i=0}^{n-1}\alpha_i^2x_i+\sum\limits_{0\leq i<j\leq n}\alpha_i\alpha_j(x_i*x_j+x_j*x_i)=\sum_{i=0}^{n-1}\alpha_ix_i.
\end{align*}
Now we want to compare coefficient of $x_k$ of both side. Now note that for all $0\leq i\leq n-1$, we have the equality
\begin{align*}
    (x_k*x_i)*x_i&=x_k.
\end{align*}
Hence comparing the both side we get that
\begin{align*}
    \alpha_k=\sum_{i=0}^{n-1}\alpha_{k*i}\alpha_i.
\end{align*}
Since we know that $\sum\limits_{i=0}^{n-1}\alpha_i=1$, we get that $(\alpha_0,\alpha_1,\cdots,\alpha_{n-1})$ should satisfy the given system of equations.
A mutatis mutandis produce the other direction for the proof of the other direction.
\end{proof}
\begin{definition}
Let $a=\sum\alpha_ix_i\in\Z[X]$. We define the \textit{length of $a$} to be $\ell(a)=|\{i:\alpha_i\neq 0\}|$.
\end{definition}
\begin{lemma}
Let $n$ be odd. Then there is no idempotent $a\in\Z[Q_n]$ satisfying $\ell(a)=2$.
\end{lemma}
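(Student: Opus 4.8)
The plan is to use the coefficient form of the idempotent condition established in the proof of Lemma~\ref{l003}. Suppose, for contradiction, that $a=\sum_{i=0}^{n-1}\alpha_i x_i$ is an idempotent with $\ell(a)=2$, and let its support be $\{p,q\}$ with $p\neq q$ and $\alpha_p,\alpha_q\in\Z\setminus\{0\}$. Recalling that $x_k*x_i=2i-k \pmod n$, Lemma~\ref{l003} tells us that the coefficients satisfy $\alpha_k=\sum_{i=0}^{n-1}\alpha_{2i-k}\,\alpha_i$ for every $k$, together with the normalization $\alpha_p+\alpha_q=1$. Since $\alpha_i=0$ off the support, only the terms $i=p$ and $i=q$ survive on the right-hand side of each equation.

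The heart of the argument is to examine the two equations indexed by $k=p$ and $k=q$ and to show that their off-diagonal (cross) terms vanish. For $k=p$ the right-hand side is $\alpha_{p}\alpha_p+\alpha_{2q-p}\alpha_q$; the cross term contains the coefficient $\alpha_{2q-p}$, which is nonzero only if $2q-p\equiv p$ or $2q-p\equiv q \pmod n$. Because $n$ is odd, $2$ is invertible modulo $n$, so each of these congruences forces $p\equiv q \pmod n$, which is excluded. Hence $\alpha_{2q-p}=0$ and the equation collapses to $\alpha_p=\alpha_p^2$. By the symmetric computation (interchanging the roles of $p$ and $q$), the equation for $k=q$ collapses to $\alpha_q=\alpha_q^2$.

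Finally, the only nonzero integer solution of $t=t^2$ is $t=1$, so $\alpha_p=\alpha_q=1$; but then $\alpha_p+\alpha_q=2\neq 1$, contradicting the normalization. This rules out an idempotent of length $2$. I expect the single genuinely delicate point to be the vanishing of the cross terms, and this is exactly where oddness is indispensable: for even $n$ one could have $2q-p\equiv q\pmod n$ (for instance $q-p=n/2$), the decoupling would fail, and the analogous statement need not hold. Aside from this, everything reduces to the scalar identity $t^2=t$ over $\Z$, so no further case analysis is needed. Alternatively, one may first apply the quandle automorphism $x\mapsto x+d$ (which induces a length-preserving ring automorphism of $\Z[Q_n]$) to normalize $p=0$, but the direct computation above makes this unnecessary.
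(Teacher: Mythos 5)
Your proof is correct, but it reaches the contradiction by a different route than the paper. Both arguments rest on the same arithmetic fact --- for odd $n$ the cross-product indices $2q-p$ and $2p-q$ lie outside the support $\{p,q\}$, because $2(q-p)\equiv 0\pmod n$ forces $p\equiv q$ --- but you exploit it through different rows of the system $B\cdot T=T$. You use the two rows \emph{inside} the support: there the cross terms vanish, the equations decouple into $\alpha_p=\alpha_p^2$ and $\alpha_q=\alpha_q^2$, hence $\alpha_p=\alpha_q=1$, and the contradiction comes from the normalization $\alpha_p+\alpha_q=1$ of Lemma \ref{l003}. The paper instead uses the single row $k=2i-j$ \emph{outside} the support, where the cross term lands: since $\alpha_k=0$, that row reads $\alpha_i\alpha_j=0$ or $2\alpha_i\alpha_j=0$ (the latter occurring when $3(j-i)\equiv 0\pmod n$), contradicting the fact that $\Z$ has no zero divisors. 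The paper's argument therefore never invokes the augmentation condition $\sum t_i=1$, which makes it slightly more robust: it rules out length-$2$ solutions of $B\cdot T=T$ of any augmentation, sidestepping the ``only if'' direction of Lemma \ref{l003}, which is delicate as stated (the zero element is an idempotent of augmentation $0$). Your argument, in exchange, pins down the only candidate as $x_p+x_q$, and you could likewise make it normalization-free by noting that $(x_p+x_q)^2=x_p+x_q+x_{2q-p}+x_{2p-q}\neq x_p+x_q$. One small slip in your closing remark about even $n$: if $q-p=n/2$, the congruence that holds is $2q-p\equiv p$ (indeed $2q-p=p+n\equiv p$), not $2q-p\equiv q$; the latter forces $p\equiv q$ for every $n$, odd or even. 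This does not affect the body of your proof, where the relevant congruence $2q-p\equiv p$ is handled correctly by the invertibility of $2$.
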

\begin{proof}

Suppose on the contrary assume that there exist $i\neq j$ with $\alpha_i,\alpha_j \neq 0$ and $\alpha_k = 0 $ for al $k \neq i,j$. Consider the $i$-the column of the adjacency matrix. Then as seen above it will be the permutation $\sigma$. Therefore there exist a $k$ such that $k$-th entry of the column is $\alpha_j$. Moreover $k$ cannot be equal to $i$. If $k$ were equal to $j$, then $x_j*x_i = x_k$ since $(x_j*x_i)*x_i = x_j$. This will force $i=j$, contrary to our assumptions. Hence $k \neq i, j$. So from the $k$-th row we get an equation of the form $\alpha_i.\alpha_j =0$ or $2\alpha_i.\alpha_j=0$ again contradicting our assumption $\alpha_i,\alpha_j \neq 0$.
\end{proof}
\section{Idempotents in $\Z[Q_5]$}
\begin{theorem}
The idempotents of $\Z[Q_5]$ are the trivial idempotents.
\end{theorem}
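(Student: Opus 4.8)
The plan is to diagonalize the idempotency equations by the discrete Fourier transform on $\mathbb{Z}_5$, which converts the awkward non-associative convolution into a handful of scalar multiplicative identities. Writing $a=\sum_{k=0}^4\alpha_kx_k$ and recalling from Lemma~\ref{l003} that in $Q_5$ (where $x_k*x_i=2i-k$) idempotency amounts to
\begin{align*}
    \alpha_k=\sum_{i=0}^4\alpha_{2i-k}\,\alpha_i\qquad(k\in\mathbb{Z}_5),
\end{align*}
I would fix a primitive fifth root of unity $\zeta$ and set $\widehat{\alpha}_j=\sum_{k=0}^4\alpha_k\zeta^{jk}$.

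First I would carry out the key computation: multiplying the $k$-th equation by $\zeta^{jk}$, summing over $k$, and re-indexing the inner sum by $m=2i-k$ yields
\begin{align*}
    \widehat{\alpha}_j=\widehat{\alpha}_{2j}\,\widehat{\alpha}_{-j}\qquad(j\in\mathbb{Z}_5).
\end{align*}
Since the Fourier transform is a bijection, this system is \emph{equivalent} to idempotency, so the whole problem reduces to solving it and then imposing integrality. The case $j=0$ reads $\widehat{\alpha}_0=\widehat{\alpha}_0^{\,2}$, forcing $\widehat{\alpha}_0\in\{0,1\}$, which recovers the augmentation constraint $\sum\alpha_i\in\{0,1\}$.

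Next I would use that the $\alpha_k$ are real, so $\widehat{\alpha}_{-j}=\overline{\widehat{\alpha}_j}$; with $p=\widehat{\alpha}_1$ and $q=\widehat{\alpha}_2$ the four remaining equations collapse to the two conjugate relations $p=q\bar p$ and $q=\bar p\,\bar q$. Taking absolute values shows $p=0\iff q=0$, and when both are nonzero one gets $|p|=|q|=1$, then $q=p^2$, and finally $p^5=1$. Hence there are only two possibilities: either $\widehat{\alpha}_1=\widehat{\alpha}_2=\widehat{\alpha}_3=\widehat{\alpha}_4=0$, or $\widehat{\alpha}_j=\zeta^{jm}$ for all $j\neq0$ and some $m\in\mathbb{Z}_5$.

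The final step is the integrality check, which is where the result is actually decided: inverting the transform via $\alpha_k=\tfrac15\sum_j\widehat{\alpha}_j\zeta^{-jk}$, I would pair each solution family with $\widehat{\alpha}_0\in\{0,1\}$. Three of the four combinations produce coefficients in $\tfrac15\mathbb{Z}\setminus\mathbb{Z}$ (the constant vector $\alpha_k=\tfrac15$, or a vector with a single $\tfrac45$ against $-\tfrac15$'s) and are rejected; the only integer vectors that survive are $\alpha=0$ and the standard basis vectors, i.e.\ $a=0$ and $a=x_m$, which are precisely the trivial idempotents. I expect the genuine obstacle to be pure bookkeeping: getting the index shifts ``$2j$'' and ``$-j$'' correct in the transformed identity (non-associativity makes these easy to misplace), and being careful to discard the spurious non-integer Fourier solutions only \emph{after} inversion rather than before. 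Should the transform identity prove delicate, the fallback is the elementary route flagged by the length lemmas—length $1$ gives $0$ and the $x_m$, length $2$ is already excluded, and lengths $3,4,5$ can be killed by direct analysis of the coefficient equations together with the $\mathrm{Aut}(Q_5)$-symmetry $x\mapsto ux+v$.
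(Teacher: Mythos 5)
Your proof is correct, and it takes a genuinely different route from the paper's. The paper derives the same coefficient equations (Lemma \ref{l003}), adjoins the constraint $\sum_i t_i=1$, and hands the resulting polynomial system to a Gr\"obner basis computation, reading the trivial solutions off the basis; you instead diagonalize the quadratic system by the discrete Fourier transform on $\Z_5$. Your transformed identity $\widehat{\alpha}_j=\widehat{\alpha}_{2j}\,\widehat{\alpha}_{-j}$ is right (the substitution $m=2i-k$ produces $\zeta^{2ij}\zeta^{-jm}$, hence the indices $2j$ and $-j$), the reduction to the pair $p=q\bar p$, $q=\bar p\,\bar q$ via $\widehat{\alpha}_{-j}=\overline{\widehat{\alpha}_j}$ is correct, and the integrality check after inversion correctly isolates $a=0$ and $a=x_m$. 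Two remarks. First, your argument is more complete than the paper's on one point: idempotency only forces the augmentation $\sum_i\alpha_i$ to lie in $\{0,1\}$ (the augmentation map is a ring homomorphism), whereas the paper's Lemma \ref{l003} and hence its Gr\"obner computation impose $\sum_i t_i=1$ and thus never address potential idempotents of augmentation $0$; your case $\widehat{\alpha}_0=0$ covers exactly these and shows the only one is $0$. Second, a trivial bookkeeping slip: of the four combinations (two values of $\widehat{\alpha}_0$ against the two families of nonzero modes), \emph{two} are rejected by integrality (the constant vector with entries $\tfrac15$, and the vector with one entry $\tfrac45$ and four entries $-\tfrac15$) and two survive, not three rejected; your own list of survivors, $0$ and the standard basis vectors, already reflects the correct count, so nothing in the argument is affected. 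As for what each approach buys: the paper's is mechanical and entirely delegated to computer algebra, while yours is computer-free and transparent, and the identity $\widehat{\alpha}_j=\widehat{\alpha}_{2j}\,\widehat{\alpha}_{-j}$ holds verbatim for every odd $n$, so your method at least sets up the general problem, with the remaining difficulty governed by the multiplicative orbits of $2$ modulo $n$.
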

\begin{proof}
We will be using Lemma \ref{l003}. Note that the adjacency matrix is given by
\begin{align*}
    \begin{pmatrix}
    0 & 2 & 4 & 1 & 3\\
    4 & 1 & 3 & 0 & 2\\
    3 & 0 & 2 & 4 & 1\\
    2 & 4 & 1 & 3 & 0\\
    1 & 3 & 0 & 2 & 4\\
    \end{pmatrix}
\end{align*}
Hence we need to find $\Z$-solutions of the system of equations given by
\begin{align*}
    \begin{pmatrix}
    t_0 & t_2 & t_4 & t_1 & t_3\\
    t_4 & t_1 & t_3 & t_0 & t_2\\
    t_3 & t_0 & t_2 & t_4 & t_1\\
    t_2 & t_4 & t_1 & t_3 & t_0\\
    t_1 & t_3 & t_0 & t_2 & t_4\\
    \end{pmatrix}
    \begin{pmatrix}
    t_0 \\ t_1 \\ t_2 \\ t_3 \\ t_4 
    \end{pmatrix}
    =
    \begin{pmatrix}
    t_0 \\ t_1 \\ t_2 \\ t_3 \\ t_4 
    \end{pmatrix},\sum\limits_{i=0}^4t_i=1.
\end{align*}
We solve this using the Groebner basis, since a common zero of the above set 
will be a common zero of the Groebner basis. The Groebner basis of the above equations is given by
\begin{align*}
    &t_4^3-t_4^2,t_1^2-t_1,t_1t_2+3t_4^2-3t_4,t_2^2-3t_4^2-t_2+3t_4,t_1t_3-t_1^2+t_1,t_2t_3,\\
    &t_3^2-4t_2^2-t_3+4t_2,t_1t_4+t_4^2-t_4,t_2t_4,t_3t_4,5t_4^2-5t_4.
\end{align*}
This system can be easily seen to have only solutions $t_i=1$ and $t_j=0$ for all $i\neq j$, $i,j\in
\{0,1,2,3,4\}$. Hence the idempotents of $\Z[Q_5]$ are trivial.
\end{proof}
\begin{corollary}
We have that $\textup{Aut}(\Z[Q_5])\cong S_5$.
\end{corollary}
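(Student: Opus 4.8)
The plan is to exploit the classification of idempotents just proved, together with the elementary fact that a ring automorphism must carry idempotents to idempotents: if $\phi\in\textup{Aut}(\Z[Q_5])$ and $e^2=e$, then $\phi(e)^2=\phi(e)$, so $\phi$ permutes the idempotent set. First I would pin that set down. The augmentation $\epsilon\colon\Z[Q_5]\to\Z$ sending every $x_i$ to $1$ is a ring homomorphism, since
\begin{align*}
\epsilon(x_ix_j)=\epsilon(x_{2j-i})=1=\epsilon(x_i)\epsilon(x_j),
\end{align*}
and I would check it is the \emph{only} nonzero ring map to $\Z$; as $\epsilon\circ\phi$ is again such a map, this forces $\epsilon\circ\phi=\epsilon$, so $\phi$ preserves augmentation. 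Hence $\phi(x_i)$ is an idempotent of augmentation $1$, and by the theorem the only such elements are $x_0,\dots,x_4$.

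Since $\{x_0,\dots,x_4\}$ is a $\Z$-basis and $\phi$ is additive, $\phi$ is determined by the permutation $\pi\in S_5$ with $\phi(x_i)=x_{\pi(i)}$, yielding an injective homomorphism
\begin{align*}
\Phi\colon\textup{Aut}(\Z[Q_5])\longrightarrow S_5,\qquad\phi\longmapsto\pi.
\end{align*}
It then remains to prove $\Phi$ is onto. For this I would try to realize a generating set of $S_5$ — say all transpositions, or one transposition together with a $5$-cycle — by producing for each an explicit ring automorphism. Because multiplication is bilinear and $x_ix_j=x_{2j-i}$, the $\Z$-linear extension of a given $\pi$ is a ring homomorphism exactly when
\begin{align*}
\pi(2j-i)\equiv 2\pi(j)-\pi(i)\pmod 5\quad\text{for all }i,j,
\end{align*}
so surjectivity reduces to solving this compatibility system for enough permutations to generate $S_5$.

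The hard part will be precisely this last step. The displayed identity is nothing but the requirement that $\pi$ be a quandle automorphism of $Q_5$, and a quick check shows the affine maps $x\mapsto cx+d$ with $c\in\Z_5^\times$ all satisfy it, giving a subgroup of order $20$; the main obstacle is whether a non-affine permutation, such as the transposition $(x_0\ x_1)$, can still be induced by some ring automorphism. I would therefore focus my effort here, and before committing to $\Phi$ being onto I would scrutinize the two places where extra automorphisms could possibly hide: whether the idempotent list is genuinely exhausted by $\{0,x_0,\dots,x_4\}$ once augmentation-zero idempotents are taken into account, and whether any ring automorphism could fail to be of permutation type. Deciding that the image is all of $S_5$, rather than merely the affine subgroup, is where the genuine difficulty of the statement resides.
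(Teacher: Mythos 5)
Your injection $\Phi\colon\textup{Aut}(\Z[Q_5])\hookrightarrow S_5$ is correct, and it is essentially the \emph{entirety} of the paper's own argument: the paper observes that each $\varphi(x_i)$ is an idempotent, hence equal to some $x_j$ by the theorem, and then simply declares the proof finished. But the step you flagged as ``where the genuine difficulty resides'' --- surjectivity of $\Phi$ --- is not merely hard: it is false, and your suspicion about the affine subgroup is exactly right. The linear extension of $\pi\in S_5$ preserves multiplication precisely when $\pi(2j-i)\equiv 2\pi(j)-\pi(i)\pmod 5$ for all $i,j$, i.e.\ precisely when $\pi$ is a quandle automorphism of $Q_5$. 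Setting $\psi(x)=\pi(x)-\pi(0)$, the case $j=0$ gives $\psi(-i)=-\psi(i)$, the case $i=0$ gives $\psi(2j)=2\psi(j)$, and then the general condition reads $\psi(2j-i)=\psi(2j)+\psi(-i)$; since $2j$ and $-i$ range over all of $\Z_5$ independently, $\psi$ is additive, so $\pi(x)=cx+d$ with $c\in\Z_5^\times$. Thus the image of $\Phi$ is exactly the affine group $\Z_5\rtimes\Z_5^\times$ of order $20$. Concretely, your transposition $(0\;1)$ is excluded: taking $i=0$, $j=1$ forces $\pi(2)\equiv 2\pi(1)-\pi(0)\equiv -1\equiv 4\pmod 5$, whereas $\pi(2)=2$.

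Consequently, granting the paper's idempotent theorem, one gets $\textup{Aut}(\Z[Q_5])\cong\textup{Aut}(Q_5)\cong\Z_5\rtimes\Z_4$ of order $20$, and the corollary as stated is wrong; the gap lies in the paper's proof (which never checks which basis permutations respect multiplication), not in your skepticism. Had you pushed your own computation one step further --- testing a single transposition against your compatibility system, as above --- your proposal would have become a disproof of the statement rather than a proof of it. This also undoes the claim in the abstract: since $\textup{Aut}(\Z[Q_5])\cong\textup{Aut}(Q_5)$, the quandle $Q_5$ does not answer Elhamdadi's question in the negative; it confirms the isomorphism in this case. Two minor points on your write-up: injectivity of $\phi$ already gives $\phi(x_i)\neq 0$, so you do not need uniqueness of the augmentation map to rule out the zero idempotent; and your worry about augmentation-zero idempotents is legitimate as a criticism of the paper's Lemma 2.5 (which assumes $\sum t_i=1$), but one can check directly that $\Z[Q_5]$ has no nonzero idempotents of augmentation zero, so it does not affect the conclusion above.
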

\begin{proof}
Note that an automorphism $\varphi$ of $\Z[Q_5]$ is determined by the images of $x_i$. Since $\varphi(x_i)$ is an idempotent and $\Z[Q_5]$ ahs only trivial idempotents, it follows that $\varphi(x_i)=x_j$ for some $j$. This finishes the proof.
\end{proof}
\bibliographystyle{alphaurl}

\begin{thebibliography}{EFT19}

	\bibitem[BPS19]{BaPaSi19}
	Valeriy~G. Bardakov, Inder Bir~S. Passi, and Mahender Singh.
	\newblock Quandle rings.
	\newblock {\em J. Algebra Appl.}, 18(8):1950157, 23, 2019.

	\bibitem[EFT19]{ElFeTs19}
	Mohamed Elhamdadi, Neranga Fernando, and Boris Tsvelikhovskiy.
	\newblock Ring theoretic aspects of quandles.
	\newblock {\em J. Algebra}, 526:166--187, 2019.
	
	\bibitem[NH05]{NeHo05}
	Sam Nelson, Benita Ho.
	\newblock Matrices and Finite quandles.
	\newblock {\em Homotopy and Homology Appl. 7}, 197-208, 2005.

\end{thebibliography}

\end{document}